\newcommand\al\alpha
\newcommand\be\beta
\newcommand\de\delta
\newcommand\la\lambda
\newcommand\tha\theta
\newcommand{\Mod}[1]{\ (\mathrm{mod}\ #1)}
\newcommand\iy\infty
\newcommand\bma{\begin{pmatrix}}
\newcommand\ema{\end{pmatrix}}
\def\lcm{\operatorname{lcm}}
\def \<#1,#2>{{\left<#1\mathbin,#2\right>}}
\def\adots{\mathinner{\mkern2mu\raise1pt\hbox{.}
\mkern3mu\raise4pt\hbox{.}\mkern1mu\raise7pt\hbox{.}}}
\def\finpr{\hfill \hbox{

\vrule height 1.453ex  width 0.093ex  depth 0ex \vrule height 1.5ex
width 1.3ex  depth -1.407ex\kern-0.1ex \vrule height 1.453ex  width
0.093ex  depth 0ex\kern-1.35ex \vrule height 0.093ex  width 1.3ex
depth 0ex}}
\newtheorem{theorem}{Theorem}
\newtheorem{lemma}{Lemma}
\newtheorem{definition}{Definition}
\newtheorem{pro}{Proposition}
\title{\textbf{Some aspects of number theory \\related to phase operators }}
    \author{  F.  Bouzeffour$^{ab}$ and M. Garayev $^{a}$ \\$^{a}$ Department of mathematics, College of Sciences.\\
 King Saud University, P. O Box $2455$ Riyadh $11451$, Saud Arabia. \\$^{b}$ Department of mathematics, College of Sciences.\\  Carthage University,
  Jarzouna 7021, Bizerte, Tunisia.\\
fbouzaffour@ksu.edu.sa, mgarayev@ksu.edu.sa}
\begin{document}
 \maketitle

{\bf Abstract:} We first extend the multiplicativity property of arithmetic functions to the setting of operators on the Fock space. Secondly, we use phase operators to get representation of some extended arithmetic functions by operators on the Hardy space. Finally, we show that radial limits  to the boundary of the unit disc in the Hardy space  is useful in order to go back to the classical arithmetic functions. Our approach can be understudied as a transition from the classical number theory to quantum setting.
\vskip0.2cm

\section{Introduction }
Many aspects of the Hardy space have a physical meaning. In particular, we found in  \cite{Vourdas90} a  characterization of the number-phase statistical of quantum harmonic oscillator by using the inner-outer factorization of the analytic functions in the unit disc. Other important fact  related to the inner-outer factorization is the arithmetic of inner functions which is due to A. Beurling \cite{Beur}.  In the same sprit the interference in phase space between components of quantum superposition states has also some effect on the arithmetic of inner functions.\\An arithmetic function $\alpha$ is said to be multiplicative if for all relatively prime positive integers $n,\,m$,  $$\alpha(nm)=\alpha(n)\alpha(m).$$
The class of arithmetic functions include well-known multiplicative functions such as, the Euler totient function,  denoted by $\varphi$, is defined as the number of positive integers less than and relatively prime to $n$; the M\"{o}bius function $\mu$, which is given by
$$ \mu(n) = \begin{cases}(-1)^{\omega(n)} & \mbox{ if } n \mbox{ is a square-free integer}\\ 0&\mbox{ otherwise, }\end{cases} $$
where $\omega(n)$ is the number of prime factors of $n$. For a more elaborate account on multiplicative functions, we refer the reader to  \cite{NivZucMont1991}.\\In this work we formulate an extension of  arithmetic multiplicative functions to the operators setting. More precisely we consider  functions such that  their domain is the positive integers and their range included in some subset of linear operators on the Fock space of harmonic oscillator algebra \cite{Brian}.\\ Our principal tools  to construct  extended arithmetic functions is the resolution of the identity in the Fock space and the analytic representation in the Hardy space. In particular we show that  arithmetic functions can be obtained by using the Berezin symbol of operators and the radial limit in the unit disc of the extended arithmetic functions. \\ \noindent The Berezin symbol of bounded operator is a covariant symbol of the operator. More precisely, it is an real analytic function on the unit disc that is bounded by the numerical radius of the operator. Often the behavior of the Berezin symbol of an operator provides important information about the operator itself. On the Hardy spaces, the Berezin symbol uniquely determines the operator.\\
The outline of the paper is as follows. In section 2, we explain the connection
between the arithmetic functions and extended arithmetic functions in Fock space. Phase operators, phase states and their analytic representations in the Hardy space and Berezin symbol are explained in section 3. Section 4 studies multiplicativity and asymptotic mulplicativity of arithmetic functions in Hardy space \cite{Rud}. We derive a several identities for arithmetic functions from the radial limit of extended arithmetic functions. We also provide a generalization of the zeta function.
\section{Extended arithmetic functions}
Let us start with the harmonic oscillator algebra $\mathcal{A}$ spanned by three linear operators $a^{-}$, $a^+$ and $N$ satisfying the following commutation relations:
\begin{equation}[a^-, a^+] = 1,\quad [N,a^\pm]=\pm a,\quad (a^-)^\dagger=a,\quad N^\dagger=N. \label{dag}\end{equation}
The Hilbert space $H$ of states is generated by the number
states $|n\rangle$, where $n = 0,\, 1, \,2,\,\dots\,.$  The states are assumed
to be orthonormal, $\langle m|n\rangle=\delta_{m,n}$, and constitute a basis
in $H$. This representation is usually called Fock space.
The operators $a$,  $a^\dagger$ and $N$
satisfying \eqref{dag} may be realized
in Fock space as
\begin{align}
&a |n\rangle= \sqrt{n}|n\rangle\quad a^\dagger|n=\rangle \sqrt{n+1}|n+1\rangle, \quad N|n\rangle= n|n\rangle.
\end{align}
 In this case the states $\{|n\rangle\}_{n=0}^\infty$ are orthonormal and provide a resolution of the identity
\begin{equation}
\sum_{n=0}^\infty |n\rangle\langle n|=1.\label{reso}
\end{equation} Thus, for every integer $n$ we have the following spectral decomposition
\begin{equation}
\sum_{j=0}^{n-1}\Pi_j(n)=1, \quad n=1,\,2\,\,\dots\,.
\end{equation}
where
\begin{equation}
\Pi_j(n)=\sum_{k=0}^\infty|nk+j\rangle\langle nk+j|.\label{porj1}
\end{equation}
We have also
\begin{equation}
\Pi_i(n)\Pi_j(n)=\delta_{ij}\Pi_j(r) .
\end{equation}
Thus, for every fixed integer $n$, $\{\Pi_j(n)\}_{j=0}^{n-1}$ constitutes a complete system of orthogonal projections in $\mathcal{H}$. We also introduce another operator $S_n$, which we call rotated operator and which we define as
\begin{equation}
S_n=\sum_{s=0}^{n-1}\varepsilon_n^{s}\Pi_s(n), \,\,\varepsilon_n=e^{\frac{2i\pi}{n}}.\label{rot1}
\end{equation}
Using the identity
\begin{equation}
\frac{1}{n}\sum_{s=0}^{n-1}\varepsilon_n^{s(k-l)}=\delta_{kl},
\end{equation}
we can invert \eqref{rot1} and get
\begin{equation}
\Pi_j(n)=\frac{1}{n}\sum_{k=0}^{n-1}\varepsilon_n^{-kj}S_n^k.
\end{equation}
It is easy now to show that the rotated operator $S_r$ is an unitary operator on $\mathcal{H}$ and  satisfies the relation
$$S_n^n=1.$$
From the well-know identity
\begin{equation}
\sum_{k=0}^{n-1}\varepsilon^{jk}_n=\left\{
  \begin{array}{l l}
    1\quad\, \text{if}\quad j|n, \\
   0\quad\, \text{otherwise},
  \end{array} \right.  \label{S1}
\end{equation}
we can also show that for every arithmetic progression $j+n,\dots j+rn$, we have
$$\Pi_{j}(n)=\sum_{k=1}^{r}\Pi_{j+kn}(nr).$$

\begin{lemma}[The Chinese Remainder Theorem] The system of congruences
\begin{equation*} \left\{
  \begin{array}{l l}
    x\equiv a \Mod{n} \\
   x\equiv b \Mod{m}.
  \end{array} \right.
  \end{equation*}
is solvable if, and only if, $\,\gcd(n,m)|a-b.\,$ Any two solutions of the system are incongruent mod $( \lcm(n,m))$.
\end{lemma}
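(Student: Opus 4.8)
The plan is to treat the three assertions in turn: necessity of the divisibility condition, existence of a solution when it holds, and the structure of the solution set modulo $\lcm(n,m)$. Throughout I write $d=\gcd(n,m)$ and $\ell=\lcm(n,m)$, and recall the basic identity $d\,\ell=nm$. The essential point that distinguishes the present statement from the classical coprime version is that $n$ and $m$ need not be coprime, so the compatibility condition $d\mid a-b$ has to be exploited explicitly rather than being automatic.

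First I would establish necessity. If $x$ satisfies both congruences, then $n\mid x-a$ and $m\mid x-b$; since $d\mid n$ and $d\mid m$, the integer $d$ divides both $x-a$ and $x-b$, and therefore divides their difference $(x-a)-(x-b)=b-a$. Hence $d\mid a-b$ is forced, which is exactly the stated condition.

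For sufficiency I would invoke B\'ezout's identity: choose integers $u,v$ with $un+vm=d$. Assuming $d\mid a-b$, put $t=(b-a)/d\in\ZZ$ and define $x=a+utn$. Multiplying $un+vm=d$ by $t$ gives $utn+vtm=b-a$, so that one also has $x=a+utn=b-vtm$. Reading the first expression modulo $n$ shows $x\equiv a\Mod{n}$, and reading the second modulo $m$ shows $x\equiv b\Mod{m}$; thus $x$ solves the system. The only thing that must be checked here is that $t$ is a genuine integer, and this is precisely the place where the hypothesis $d\mid a-b$ enters.

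Finally, for the uniqueness part I would take two solutions $x,x'$ and subtract: from $x\equiv x'\Mod{n}$ and $x\equiv x'\Mod{m}$ we obtain $n\mid x-x'$ and $m\mid x-x'$, whence $\ell=\lcm(n,m)$ divides $x-x'$, i.e. $x\equiv x'\Mod{\ell}$; conversely adding any multiple of $\ell$ to one solution yields another, so the solution set is a single residue class modulo $\ell$. In other words the solution is unique modulo $\lcm(n,m)$, so any two solutions are congruent mod $\lcm(n,m)$. I do not expect a genuine obstacle in this lemma; the only care needed is the bookkeeping with the common factor $d$. If desired, I would close with the remark that this residue-class description is exactly what makes the product of projections $\Pi_a(n)\Pi_b(m)$ collapse to a single projection $\Pi_{x}(\ell)$ (and to $0$ when $d\nmid a-b$) in the Fock-space picture developed above.
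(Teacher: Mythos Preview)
Your argument is correct and entirely standard: necessity via the common divisor $d$, sufficiency via a B\'ezout combination, and the solution-set description via divisibility by $\ell=\lcm(n,m)$. There is nothing to compare against, since the paper merely \emph{states} this classical lemma without proof and then invokes it in the proof of Theorem~\ref{th1}. One small point worth flagging: the lemma as printed says ``any two solutions of the system are \emph{incongruent} mod $(\lcm(n,m))$,'' which is evidently a slip for ``congruent''; you have (rightly) proved the intended assertion that the solution is unique modulo $\lcm(n,m)$, which is exactly what is used in Theorem~\ref{th1} to identify the product $\Pi_k(n)\Pi_l(m)$ with a single projection $\Pi_j(\lcm(n,m))$.
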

\begin{theorem}\label{th1}
For arbitrary positive integers $n$, $m,$ and $k,l=0,\,1,\,\dots$, we have
\begin{equation}
\Pi_{k}(n)\Pi_{l}(m)= \left\{
  \begin{array}{l l}
    \Pi_{j}(\lcm(n,m))\quad \text{if}\quad \gcd(n,m)\mid l- k \\
    0\quad \text{otherwise}.
  \end{array} \right.
\end{equation}
where $j$ is the unique solution $\Mod{\lcm(n,m)}$ of the system of congruences
\begin{equation} \left\{
  \begin{array}{l l}
    j\equiv\, k \Mod{n} \\
   j\equiv l \Mod{m}.
  \end{array} \right.
  \end{equation}
Here $\lcm(n,m)$ is the least common multiple of  integers $n,\,m.$
\end{theorem}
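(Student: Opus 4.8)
The plan is to use the fact that every $\Pi_j(n)$ is diagonal in the number basis $\{|n\rangle\}$, so the two factors commute and their product is again diagonal, with range equal to the intersection of the two families of basis vectors. Writing the definition \eqref{porj1} in the compact form
\begin{equation*}
\Pi_k(n)=\sum_{a\,\equiv\,k\ (n)}|a\rangle\langle a|,\qquad
\Pi_l(m)=\sum_{b\,\equiv\,l\ (m)}|b\rangle\langle b|,
\end{equation*}
where each sum runs over the nonnegative integers lying in the indicated residue class, makes this structure explicit. (Each series converges in the strong operator topology as an increasing limit of finite-rank projections, so term-by-term manipulations are justified.)

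First I would multiply the two series and use orthonormality $\langle a|b\rangle=\delta_{ab}$ to collapse the double sum:
\begin{equation*}
\Pi_k(n)\Pi_l(m)
=\sum_{a\,\equiv\,k\ (n)}\ \sum_{b\,\equiv\,l\ (m)}|a\rangle\langle a|b\rangle\langle b|
=\sum_{\substack{a\,\equiv\,k\ (n)\\ a\,\equiv\,l\ (m)}}|a\rangle\langle a|.
\end{equation*}
Thus the product is precisely the orthogonal projection onto the closed span of those $|a\rangle$ whose index $a$ solves simultaneously $a\equiv k\Mod{n}$ and $a\equiv l\Mod{m}$.

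The remaining step is purely arithmetic and is where the Chinese Remainder Theorem (the Lemma above) does all the work. If $\gcd(n,m)\nmid l-k$ the system has no solution, the index set is empty, and the sum is the zero operator. If $\gcd(n,m)\mid l-k$ the system is solvable and, by the uniqueness clause of the Lemma, its solutions are exactly the nonnegative integers congruent to a single residue $j$ modulo $\lcm(n,m)$, where $j$ is the asserted unique solution mod $\lcm(n,m)$. Hence the index set $\{a: a\equiv j\ (\lcm(n,m))\}$ is exactly the one defining $\Pi_j(\lcm(n,m))$, and
\begin{equation*}
\Pi_k(n)\Pi_l(m)=\sum_{a\,\equiv\,j\ (\lcm(n,m))}|a\rangle\langle a|=\Pi_j(\lcm(n,m)),
\end{equation*}
as claimed. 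No step here is a genuine obstacle; the only care needed is to match the divisibility condition and residue class produced by the Lemma to the statement's $\gcd$-condition and to the definition of $\Pi_j(\lcm(n,m))$.
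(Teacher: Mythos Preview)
Your argument is correct and follows essentially the same route as the paper: rewrite $\Pi_k(n)$ as $\sum_{a\equiv k\,(n)}|a\rangle\langle a|$, multiply using orthonormality to obtain the sum over simultaneous solutions of the two congruences, and then invoke the Chinese Remainder Theorem (Lemma~1) to identify that index set as either empty or a single residue class mod $\lcm(n,m)$. The only difference is that you spell out the orthonormality and convergence details a bit more carefully than the paper does.
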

\begin{proof}Observe that
\begin{equation}
\Pi_j(n)=\sum_{k\equiv j\Mod{n}}|k\rangle\langle k|.
\end{equation}
Then \begin{equation}
\Pi_k(n)\Pi_l(m)=\sum_{\substack{r\equiv k\Mod{n}\\r\equiv l\Mod{m}}}|r\rangle\langle r|.
\end{equation}
The result follows from the Lemma 1.
\end{proof}
\begin{definition}
A sequence $\{\Phi(n)\}_{n=1}^\infty$ of linear operators on the Fock space $\mathcal{H}$ is said to be  extended multiplicative function  on  $\mathcal{H}$, if the  map
$\Phi \colon \mathbb{N}\times \mathcal{H}\to \mathcal{H}$
satisfies the following properties: \\
(1) for any $n \in \mathbb{N} $, the map $\Phi (n)\colon \mathcal{H}\to \mathcal{H}$, $v \to \Phi(n,v)$ is linear,\\
(2) $ \Phi (nm)=\Phi (n)\Phi (m), \,\, \text {whenever}\,\quad \gcd(n,m)=1.$
\end{definition}
Note that every multiplicative function $\alpha: \mathbb{N} \rightarrow \mathbb{C}$ can be identified with the extended multiplicative function $\alpha \, 1$ ($1$ is the identity operator). From Theorem 1 we deduce that for every relatively prime  $n$ and $m$ we have
\begin{equation}
\Pi_{j}(n)\Pi_{j}(m)=\Pi_{j}(nm).\label{bb2}
\end{equation}
Thus shows that for every fixed integer $j$ the sequence $\{\Pi_{j}(n)\}_{n=1}^\infty$ is an extended multiplicative function on  $\mathcal{H}$.
Note also that if $n\mid m$, then
\begin{equation*}
\Pi_{j}(n)\Pi_k(m)=\left\{
  \begin{array}{l l}
    \Pi_k(m),\,\,\, \text{if} \,\,\,k\equiv j \Mod{n} \\
   0,\,\,\,\text{otherwise} .
  \end{array} \right.
  \end{equation*}
By unique factorization, there is a unique way of writing $n=p_1^{\alpha_1}\dots p_k^{\alpha_k}$
and \eqref{bb2} gives that
\begin{equation}
\Pi_{j}(n)=\Pi_{j_1}(p^{\alpha_1})\dots \Pi_{j_k}(p^{\alpha_k}),
\end{equation}
where
\begin{equation}
1\leq j_l\leq p_l^{\alpha_l}\,\,\text{and}\,\,j_l\equiv j\mod p_l^{\alpha_l},\quad l=1,\dots,k.
\end{equation}

 We naturally  extend the convolution products, such as the Dirichlet product, $\lcm$ product and unitary product (see, \cite{Bus}) as follows:
\begin{align}
&(A\, * \,B)(n)=\sum_{kl=n}A(k)B(l)\quad (\text{Dirichlet product}), \label{conv}\\
&(A\, \Box\, B)(n)=\sum_{\lcm(k,l)=n}A(k)B(l)\quad (\text{lcm-product)},\label{conv1}\\
&  (A\, \sqcup\, B)(n)=\sum_{kl=n\,\,\gcd(k,l)=1}A(k)B(l)\quad (\text{unitary product)},\label{conv11}
\end{align}
where $\{A(n)\}$ and $\{B(n)\}$ are two sequences of linear operators on $\mathcal{H}.$\\
Note that  extended convolution products defined above are associative, but in general are not commutative.\\ We can also show that if
$\alpha,\,\beta: \,\mathbb{N}\rightarrow \mathbb{C}$ are two arithmetic functions and $j\geq 1,$ then
\begin{align}
&\alpha \Pi_j\,\square \,\beta \Pi_j=(\alpha \square \beta) \Pi_j,\label{b1}\\& \alpha P_j\sqcup \beta \Pi_j=(\alpha \sqcup \beta)\, \Pi_j, \label{b2}\\&
\nu_0* (\alpha \square \beta )\Pi_j=(\nu_0*\alpha \Pi_j)\,(\nu_0*\beta \Pi_j)\label{b3}.
\end{align}
In particular,
\begin{align}
(\Pi_j\,\square \,\Pi_{j})(n)=M_2(n)\Pi_{j}(n)\quad \mbox{and}\quad (\Pi_j\sqcup \Pi_{j})(n)=2^{\omega(n)}\Pi_{j}(n),
\end{align}
where
\begin{equation*}
M_s(n)=\left\{
  \begin{array}{l l} 1
    \quad \text{if}\,\,\,n=1, \\
    \prod_{k=1}^r\big((a_s+1)^s-a_k^s\big)\quad\,\text{if}\quad n=\prod_{k=1}^rp_k^{a_k},
  \end{array} \right.  \label{S1}
\end{equation*}
and $\nu_0(n)=1.$
\section{Phase operator}
We denote by  $H^2,$ the Hardy space of all analytic functions on the unit disc $\mathbb{D}=\{z\in \mathbb{C}: |z|<1\}$ for which
\begin{equation}
\|f\|^2_2=\sup_{0<r<1}\frac{1}{2\pi}\int_0^{2\pi}|f(re^{i\theta})|^2d\theta <\infty.
\end{equation}
and $H^\infty$ is the space the bounded analytic functions on $\mathbb{D}$ equipped with the
usual norm $$\|f\|_\infty  := \sup_{z\in \mathbb{D} }|f(z)|.$$ Let us recall some fundamental theorems on the Hardy classes see \cite{Rud}. For $f\in H^2$ the radial limit
\begin{equation}
f(e^{i\theta})=\lim_{r\rightarrow 1}f(re^{i\theta})
\end{equation}
exists almost everywhere (a.e.) on the unit circle $\mathbb{T}=\partial \mathbb{D}$. As a consequence, we identify functions $f$ in
$H^2$ with their non-tangential boundary limits on $\mathbb{T}$ also denoted by $f$.
The family $\{e_n(z)=z^n\}_{n=0}^\infty$ is an orthonormal basis for $H^2$. The  reproducing kernel of  $H^2$ is given by
\begin{align}
&k_\lambda(z)=
\frac{1}{1-\overline{\lambda}z}.
\end{align}
Recall that the Berezin symbol $\widetilde{A}$ of a bounded linear operator $A$ on $H^2$ is given by the formula \cite{Zhu}
\begin{equation}
\widetilde{A}(\lambda):=<A\widehat{k}_{\lambda},\widehat {k}_{%
\lambda}>,\text{ }\lambda\in\mathbb{D}.\label{ber1}
\end{equation}
where \begin{equation}
\widehat{k}_{\lambda}=\frac{k_{\lambda}}{\|k_{\lambda}\|} ,\la\in \mathbb{D}.
\end{equation}
Phase operators were defined in \cite{Vourdas90} through the polar decomposition of the annihilation and creation operators $a^-$ and $a^+$
\begin{equation}
a^-=E_-N^{1/2},\quad a^+=N^{1/2}E_+.
\end{equation}
The phase operators $E_+$, $E_-$ can be written as
\begin{align}
&E_+=\sum_{n=0}^\infty |n+1\rangle \langle n|,\\&
E_-=E_+^\dagger=\sum_{n=0}^\infty |n\rangle \langle n+1|.
\end{align}
$E_+$ is an isometric operator but it is not unitary
\begin{equation*}
E_+E_-=1-|0\rangle\langle0|,\quad E_-E_+=1.
\end{equation*}
Phase states and phase operators has been intensively studied in the context of compact and noncompact groups, see \cite{Perelomov86}. The major difficulty in formulating in a consistent way a unitary phase operator for a quantum oscillator is the infinite character of the spectrum of the number operator.
The non-unitarity of the phase operator for quantum
harmonic oscillator is intimately related to the fact that the number operator
spectrum is lower bounded see \cite{les2Berndt}.\\
The eigenstate  of the operator $E_-$ are known to be \cite{Vourdas90}
\begin{equation}
|z\rangle=(1-|z|^2)^{1/2}\sum_{n=0}^\infty z^n|n\rangle=(1-|z|^2)^{1/2}(1-zE_+)^{-1}|0\rangle,\quad |z|<1,
\end{equation}
where
\begin{equation}
(1-zE_+)^{-1}=\sum_{n=0}^\infty (zE_+)^n,\quad |z|<1.
\end{equation}
The overlap of two phase states is
\begin{equation}
\langle z|\la\rangle=\frac{(1-|z|^2)^{1/2}(1-|\la|^2)^{1/2}}{1-\overline{\la}z}.
\end{equation}
Following \cite{Vourdas90}, the analytic representation is defined by mapping the number state $|n\rangle$ into $z^n$. Consequently,  if $|f\rangle$ is an arbitrary state such that
\begin{equation}
|f\rangle=\sum_{n=0}^\infty f_n\,|n\rangle,\quad \sum_{n=0}^\infty |f_n|^2<\infty,
\end{equation}
then it is represented in the Hardy space by the function
\begin{equation}
f(z)=(1-|z|^2)^{-1/2}\langle z^*|f\rangle =\sum_{n=0}^\infty f_nz^n \in H^2\label{iden1}.
\end{equation}
An operator $A$ on the Fock space is represented by
$$(1-|z|^2)^{-1/2}\langle z^*|A|f\rangle.$$
In particular, the phase operators $E_-$ and $E_+$ are represented  by the backward shift operator and the shift operator:
\begin{align*}
&(1-|z|^2)^{-1/2}\langle z^*|E_-|f\rangle:=\frac{f(z)-f(0)}{z},\\&
(1-|z|^2)^{-1/2}\langle z^*|E_+|f\rangle:=zf(z).
\end{align*}
The scalar product of two states $|f\rangle$ and $|g\rangle$ is given by the boundary functions as
\begin{equation}
\langle f|g\rangle=\frac{1}{2\pi}\int_0^{2\pi}f(e^{i\theta})g(e^{i\theta})\,d\theta.
\end{equation}
The phase representation is based on the phase states
\begin{equation}
|\theta\rangle=\lim_{|z|\rightarrow 1}(1-|z|^2)^{-1/2}|z\rangle =\sum_{n=0}^\infty e^{in\theta}|n\rangle, \quad z=|z|e^{i\theta}.\label{iden2}
\end{equation}
In view of these representations \eqref{iden1} and \eqref{iden2}, we will mostly not distinguish between operator on Fock space and it's representation in the  Hardy space.
The Berezin symbol of a bounded operator $A$ on the Fock space   defined in \eqref{ber1} in terms of the above representation by
\begin{equation}
\widetilde{A}(\lambda)=\langle \la^*|A|\la^*\rangle.
\end{equation}
\section{Radial limit of extended arithmetic functions}
In this section we study the radial limit of the extended arithmetic functions.  We need the following definitions.
\begin{definition} Let $\{\Phi(n)\}_{n=1}^\infty$ be a sequence of bounded linear operators on $H^2$.\\
(1)The sequence  $\{\Phi(n)\}_{n=1}^\infty$ is said to be multiplicative if for  all $\la \in \mathbb{D}$ we have
\begin{equation}
\widetilde{\Phi(nm)}(\la)=\widetilde{\Phi(n)}(\la)\widetilde{\Phi(m)}(\la)\quad\text {whenever}\,\quad \gcd(n,m)=1.
\end{equation}
(2)The sequence  $\{\Phi(n)\}_{n=1}^\infty$  is said to be asymptotically multiplicative if \begin{equation}
\lim_{|\la| \to 1}(\widetilde{\Phi(nm)}(\la)-\widetilde{\Phi(n)}(\la)\widetilde{\Phi(m)}(\la))=0\quad\text {whenever}\,\quad \gcd(n,m)=1.
\end{equation}
\end{definition}
It is easy to see that every multiplicative sequence is asymptotically multiplicative. The converse is not true. For example, the sequence $\{\Pi_j(n)\}_{n=1}^\infty$ is asymptotically multiplicative, but not multiplicative. Indeed, a straightforward computation shows that
\begin{align}
\widetilde{\Pi_{j}(n)}(\lambda)=\frac{1-|\lambda|^2}
{1-|\lambda|^{2n}}|\lambda|^{2j}\quad \text{and}\quad \lim_{|\la| \to 1 }\widetilde{\Pi_{j}(n)}(\lambda)=\frac{1}{n}.\label{T1}
\end{align}
More examples of asymptotically multiplicative functions can be obtained by considering the Dirichlet convolution product $\al*\beta \Pi_j$ for arbitrary multiplicative functions $\al$ and $\beta$. From \eqref{T1}, we obtain
\begin{equation}
\widetilde{\al*\beta \Pi_j(n)}(\la)=\sum_{d|n}\al(n/d)\beta(d)\frac{1-|\lambda|^2}
{1-|\lambda|^{2d}}|\lambda|^{2j}.
\end{equation}
In particular, it was shown in \cite{Bouz3} that \begin{equation*}
C_j=\mu\,*\nu_1\Pi_j \quad \mbox{and} \quad T_j=\nu_0\,*\mu \Pi_j,
\end{equation*}
where
\begin{align}
&C_{j}(n)=\sum_{\substack{\gcd(k,n)=1\\1\leq k\leq n}}\varepsilon_{n}^{-kj}S_n^k\label{reff1}\\&
T_{j}(n)=\sum_{\substack{\gcd(k,n)=1\\1\leq k\leq n}}\varepsilon_{n}^{kj}\Pi_{j+k}(n).\label{reff2}
\end{align}
Hence  \begin{equation*}
\widetilde{C_j(n)}=\sum_{d|n}\mu(d)\frac{n}{d}\frac{1-|\lambda|^2}
{1-|\lambda|^{2n/d}}|\lambda|^{2j} \quad \mbox{and} \quad \widetilde{T_j(n)}(\la)=\sum_{d|n}\mu(d)\frac{1-|\lambda|^2}
{1-|\lambda|^{2d}}|\lambda|^{2j}.
\end{equation*}
and
 \begin{equation*}
\lim_{\la \to \partial \mathbb{D}}\widetilde{C_j(n)}=\sum_{d|n}\mu(d)=\epsilon(n) \quad \mbox{and} \quad \widetilde{T_j(n)}(\la)=\sum_{d|n}\frac{\mu(d)}{d}=\nu_0*\nu_1\mu.
\end{equation*}
Taking the Berezin symbol in \eqref{reff1} and \eqref{reff2} we get the following identities
\begin{align}
&\sum_{d|n}\mu(d)\frac{n}{d}\frac{|\lambda|^{2j}}
{1-|\lambda|^{2n/d}}=\sum_{\substack{\gcd(k,n)=1\\1\leq k\leq n}}\frac{\varepsilon_{n}^{-kj}
}{1-\varepsilon^k_n
|\lambda|^{2}}\label{ref1}\\&
\sum_{d|n}\mu(d)\frac{|\lambda|^{2j}}
{1-|\lambda|^{2d}}=
(1-|\lambda|^{2n})^{-1}\sum_{\substack{\gcd(k,n)=1\\1\leq k\leq n}}\varepsilon_{n}^{kj}|\lambda|^{2k+2j}.\label{ref2}
\end{align}
Let $\al$ be an  arbitrary  arithmetic function. We consider the following truncated operator
\begin{equation*}
N_{\al,j}=\sum_{k=1}^\infty \al(k)\overline{\Pi}_j(k),
\end{equation*}
where \begin{equation}
\overline{\Pi}_j(k)=\Pi_{j}(k)-|j\rangle\langle j|=\sum_{l=1}^\infty |lk+j\rangle\langle lk+j|.\label{ef1}
\end{equation}
This operator  acts on the states $|n\rangle$ as
\begin{align}
&N_{\alpha,j}|n\rangle= 0\quad \text{for}\quad n=0,\dots,\,j,\\&N_{\alpha,j}|n\rangle= (\nu_0*\alpha)(n-j)|n\rangle\quad \text{for}\quad n\geq j+1.\label{num1}
\end{align}
Hence we have
\begin{equation}
N_{\al,j}=\sum_{k=1}^\infty (\nu_0*\al)(k)|k+j\rangle\langle k+j|.
\end{equation}
\begin{pro}For every arithmetic functions $\al$ ,$\beta$ and every integer $j$ we have\\
1. $N_{\alpha,j}N_{\beta,j}=N_{\al\Box \beta,j},$\\
2. $N_{\mu*\al,j}N_{\mu*\be,j}=N_{\mu*\al \be,j}.$
\end{pro}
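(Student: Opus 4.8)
The plan is to exploit the fact that each operator $N_{\al,j}$ is \emph{diagonal} in the number basis. From the displayed formula $N_{\al,j}=\sum_{k=1}^\infty(\nu_0*\al)(k)\,|k+j\rangle\langle k+j|$ one reads off that $N_{\al,j}$ multiplies $|k+j\rangle$ by the scalar $(\nu_0*\al)(k)=\sum_{d\mid k}\al(d)$ and kills $|0\rangle,\dots,|j\rangle$. Since a product of two operators diagonal in the same orthonormal basis is again diagonal, with eigenvalues the products of the corresponding eigenvalues, I would first reduce both assertions to scalar identities between divisor sums. Writing informally $\al\mapsto\nu_0*\al$ for the divisor-sum transform, statement~1 becomes
\begin{equation*}
(\nu_0*\al)(k)\,(\nu_0*\be)(k)=\bigl(\nu_0*(\al\,\Box\,\be)\bigr)(k)\qquad(k\geq 1),
\end{equation*}
while statement~2 becomes the same identity with $\al,\be$ replaced by $\mu*\al,\mu*\be$ and the \RHS\ replaced by $\bigl(\nu_0*(\mu*\al\be)\bigr)(k)$.

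The key step, and what I expect to be the only substantive point, is the lemma that the divisor-sum transform sends the lcm-product to ordinary pointwise multiplication. To prove it I would expand
\begin{equation*}
(\nu_0*\al)(k)\,(\nu_0*\be)(k)=\Bigl(\sum_{d\mid k}\al(d)\Bigr)\Bigl(\sum_{e\mid k}\be(e)\Bigr)=\sum_{d\mid k}\ \sum_{e\mid k}\al(d)\be(e),
\end{equation*}
and then regroup the pairs $(d,e)$ according to $m=\lcm(d,e)$. The crux is the elementary observation that for $d\mid k$ and $e\mid k$ one automatically has $\lcm(d,e)\mid k$, and conversely any pair with $\lcm(d,e)=m$ and $m\mid k$ already satisfies $d\mid k,\,e\mid k$; so the double sum collapses to $\sum_{m\mid k}\sum_{\lcm(d,e)=m}\al(d)\be(e)=\sum_{m\mid k}(\al\,\Box\,\be)(m)=\bigl(\nu_0*(\al\,\Box\,\be)\bigr)(k)$. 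Comparing diagonal entries then yields statement~1 immediately.

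For statement~2 I would apply statement~1 with $\al,\be$ replaced by $\mu*\al$ and $\mu*\be$, so that $N_{\mu*\al,j}N_{\mu*\be,j}=N_{(\mu*\al)\,\Box\,(\mu*\be),j}$, and again compare diagonal entries. Using associativity of the Dirichlet product and the M\"obius relation $\nu_0*\mu=\epsilon$, the entries on the left simplify via $(\nu_0*(\mu*\al))(k)=((\nu_0*\mu)*\al)(k)=\al(k)$ and likewise $(\nu_0*(\mu*\be))(k)=\be(k)$, whose product is $(\al\be)(k)$; on the target side $(\nu_0*(\mu*\al\be))(k)=((\nu_0*\mu)*\al\be)(k)=(\al\be)(k)$, and the two coincide. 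Everything past the combinatorial collapse in the lemma is routine bookkeeping with Dirichlet convolution.
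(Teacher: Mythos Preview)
Your argument is correct, but it is organized differently from the paper's own proof. You pass immediately to the diagonal form $N_{\al,j}=\sum_{k\geq 1}(\nu_0*\al)(k)\,|k+j\rangle\langle k+j|$ and reduce both statements to the scalar identity $(\nu_0*\al)(k)\,(\nu_0*\be)(k)=\bigl(\nu_0*(\al\,\Box\,\be)\bigr)(k)$, which you prove by the regrouping $\{(d,e):d\mid k,\ e\mid k\}=\bigcup_{m\mid k}\{(d,e):\lcm(d,e)=m\}$. The paper instead stays at the operator level: it first shows, via Theorem~\ref{th1}, that $\overline{\Pi}_j(n)\overline{\Pi}_j(m)=\overline{\Pi}_j(\lcm(n,m))$, and then expands $N_{\al,j}N_{\be,j}=\sum_{n,m}\al(n)\be(m)\overline{\Pi}_j(\lcm(n,m))=\sum_r(\al\Box\be)(r)\overline{\Pi}_j(r)$, invoking \eqref{b1}. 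For part~2 the paper simply cites the known arithmetic identity $(\mu*\al)\Box(\mu*\be)=\mu*(\al\be)$, whereas you effectively supply a proof of it by applying $\nu_0*$ to both sides and using $\nu_0*\mu=\epsilon$. The underlying combinatorial fact is the same in both routes (that $d\mid k$ and $e\mid k$ iff $\lcm(d,e)\mid k$); your version is more self-contained and avoids the CRT-based Theorem~\ref{th1}, while the paper's version keeps the argument phrased in terms of the projection operators that the paper is built around.
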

\begin{proof}From \eqref{ef1} and Theorem 1, we have
\begin{align*}
\overline{\Pi}_{j}(n)\overline{\Pi}_{j}(m)&=\big(\Pi_{j}(n)-|j\rangle\langle j| \big)\big(\Pi_{j}(m)-|j\rangle\langle j|\big)\\&=\Pi_{j}(n)\Pi_{j}(m)-|j\rangle\langle j|
\\&=\Pi_{j}(\lcm(n,m))-|j\rangle\langle j|\\&=
    \overline{\Pi}_{j}(\lcm(n,m)).
\end{align*}
Using formula \eqref{b1}, we get
\begin{align*}
N_{\alpha \Box\beta,j}&=\sum_{n=1}
^\infty (\alpha \Box \beta)(n)\overline{\Pi}_j(n)\\&=
\sum_{n=1}
^\infty (\alpha \overline{\Pi}_j\Box  \beta \overline{\Pi}_j)(n)\\&=
\sum_{n=1}
^\infty \alpha(n) \overline{\Pi}_j(n)\sum_{n=1}^\infty \beta (n)\overline{\Pi}_j(n)
\\&=N_{\alpha,j}N_{\beta,j}.\end{align*}The second identity follows from the fact that
$$(\mu*\al)\Box(\mu*\be)=\mu*(\al\be).$$
\end{proof}
In particular, for $\al=\varphi$ ($\varphi$ is the Euler Totient arithmetic function ) and  from the Euler identity
\begin{equation}
n=\sum_{d|n}\varphi(d),
\end{equation} and formula \eqref{num1}, we see that the operator $N_{\varphi,j}$ coincides with the number operator $N$, i.e
\begin{align}
&N_{\varphi,0}=a^+a^-=\sum_{n=1}^\infty \varphi(n)(\Pi_0(n)-|0\rangle \langle0 |),
\\&N_{\varphi,1}=a^-a^+=\sum_{n=1}^\infty \varphi(n)(\Pi_1(n)-|0\rangle \langle0 |).
\end{align}
More generally, the operator $N_{\al,j}$  can be identified with Hamiltonian associated to some deformed oscillator algebra.
The simple choose $\al(n)=\frac{1}{n^s}$ leads to the  following generalization of Riemann zeta function
\begin{equation*}
N_{\al,0}=\sum_{n=1}^\infty \frac{1}{n^s}\overline{\Pi}_{0}(n).
\end{equation*}
It is easy to see that for $\Re(s)>1$, $N_{\al,0}$ is bounded operator in $H^2$ and
$$\|N_{\al,0}\|\leq \zeta(\Re(s)).$$
Furthermore, from \eqref{num1} we have
\begin{equation}
N_{\al,0}|n\rangle=\frac{\sigma_s(n)}{n^s}\,|n\rangle,\label{dir3}
\end{equation}
where the sum of positive divisors function $\sigma_s(n),$ for a real or complex number $s$, is defined by \begin{equation}\sigma_s(m)=\sum_{n|m} n^s.
\end{equation}
The Berezin symbol of $N_{\al,0}$ takes the form
\begin{equation}
\widetilde{N_{\al,0}}(\la)=\sum_{n=1}^\infty \frac{1}{n^s}\frac{1-|\lambda|^2}
{1-|\lambda|^{2n}}|\lambda|^{2n}.\label{dir4}
\end{equation}
Using the inequality
\begin{equation}
|\frac{1}{n^s}\frac{1-|\lambda|^2}
{1-|\lambda|^{2n}}|\lambda|^{2n}|\leq \frac{1}{n^{\Re(s)+1}},\quad \la\in \mathbb{D},
\end{equation}
we obtain the following Lambert convergence
\begin{equation}
\lim_{|\la|\rightarrow 1}\widetilde{N_{\al,0}}(\la)=\zeta(s+1).\end{equation}
We can also show that  \begin{align}
\zeta(s)N_{\al,0}=\sum_{n=1}^\infty \frac{1}{n^s}T_0(n),\quad
\frac{1}{\zeta(s+1)}N_{\al,0}=\sum_{n=1}^\infty \frac{1}{n^{s+1}}C_0(n).
\end{align}
More generally, the boundary limit of Berezin symbol of the operator $N_{\al,j}$ for any  arithmetic function $\al$ coincides with Abel convergence and Lambert convergence. Moreover, the obtained zero limit is closely related with compactness of operator (see \cite{Kara2} and references therein).
\section{Conclusion}
In this work we provide an extension of the classical arithmetic functions and convolution products to the arithmetic quantum theory.
Most importantly, we define a generalized number operator, which can be interpreted as a bosonic Hamiltonian of some deformed Heisenberg oscillator algebra and also it is related to fermionic and parafermionic thermal partition functions.
On the technical side, the most significant result of this paper is the use of phase operators to get connection between the arithmetic of inner functions and arithmetic quantum theories. We have found that these connections are even more compelling and worthy of study.

\end{document}